\newtheorem{theorem}{Theorem}
\newtheorem{lemma}{Lemma}
\newtheorem{proposition}{Proposition}
\newtheorem{definition}{Definition}
\newcommand{\C}{\mathcal{C}}
\newcommand{\F}{\mathcal{F}}
\renewcommand{\L}{\mathcal{L}}
\begin{document}
\title{Direct limits of Gorenstein injective modules}

\author{Alina Iacob}
\address{A.I. \ Department of Mathematical Sciences \\
         Georgia Southern University \\
         Statesboro (GA) 30460-8093 \\ U.S.A.}
\email[Alina Iacob]{aiacob@GeorgiaSouthern.edu}

\begin{abstract}
One of the open problems in Gorenstein homological algebra is: when is the class of Gorenstein injective modules closed under arbitrary direct limits? It is known that if the class of Gorenstein injective modules, $\mathcal{GI}$, is closed under direct limits, then the ring is noetherian. The open problem is whether or not the converse holds. We give equivalent characterizations of $\mathcal{GI}$ being closed under direct limits. More precisely, we show that the following statements are equivalent:\\
(1) The class of Gorenstein injective left $R$-modules is closed under direct limits.\\
(2) The ring $R$ is left noetherian and the character module of every Gorenstein injective left $R$-module is Gorenstein flat.\\
(3) The class of Gorenstein injective modules is covering and it is closed under pure quotients.\\
(4) $\mathcal{GI}$ is closed under pure submodules.

\end{abstract}

\maketitle

\section{preliminaries}
Throughout the paper $R$ will denote an associative ring with identity. Unless otherwise stated, by \emph{module} we mean \emph{left} $R$-module.

We will denote by $Inj$ the class of all injective modules.
We recall that an $R$-module $M$ is Gorenstein injective if there exists an exact and $Hom(Inj, -)$ exact complex of injective modules\\ $\textbf{I}= \ldots \rightarrow I_1 \rightarrow I_0 \rightarrow I_{-1} \rightarrow \ldots $ such that $M = Ker (I_0 \rightarrow I_{-1})$.\\
We will use the notation $\mathcal{GI}$ for the class of Gorenstein injective modules.\\

Given a class of $R$-modules $\mathcal{F}$, we will denote as usual by $^\bot \F$ the class of all $R$-modules $N$ such that $Ext^1(N,F)=0$ for every $F \in \F$. The right orthogonal class of $\mathcal{F}$, $\mathcal{F}^\bot$, is defined in a similar manner: $\mathcal{F}^\bot = \{X, Ext^1(F,X)=0, \forall F \in \mathcal{F} \}$.\\

We recall the definitions for Gorenstein injective precovers, and covers. \\
\begin{definition} A homomorphism $\phi: G \rightarrow M$ is a Gorenstein injective precover of $M$ if $G$ is Gorenstein injective and if for any Gorenstein injective module $G'$ and any $\phi' \in Hom(G', M)$ there exists $u \in Hom(G', G)$ such that $\phi' = \phi u$. \\
A Gorenstein injective precover $\phi$ is said  to be a cover if any $u \in End_R(G)$ such that $\phi u = \phi$ is an automorphism of $G$.\\
\end{definition}

We recall that a pair $(\mathcal{L}, \mathcal{C})$ is a \emph{cotorsion pair} if $\mathcal{L} ^\bot = \mathcal{C}$ and $^\bot \mathcal{C} = \mathcal{L}$.\\
A cotorsion pair $(\mathcal{L}, \mathcal{C})$ is \emph{complete} if for every $_RM$ there exists exact sequences $ 0 \rightarrow C \rightarrow L \rightarrow M \rightarrow 0$ and $0 \rightarrow M \rightarrow C'\rightarrow L' \rightarrow 0$ with $C$, $C'$ in $\mathcal{C}$ and $L$, $L'$ in $\mathcal{L}$.\\

\begin{definition}
 A cotorsion pair $(\L, \C)$ is called hereditary
if one of the following equivalent statements hold:
\begin{enumerate}
\item $\L$ is resolving, that is, $\L$ is closed under taking kernels of epimorphisms.
\item $\C$ is coresolving, that is, $\C$ is closed under taking cokernels of monomorphisms.
\item $Ext^i (F, C) = 0$ for any $F \in \F$ and $C\in \C$ and $i\geq 1$.
\end{enumerate}
\end{definition}

It is known (\cite{saroch.stovicek}, Theorem 4.6) that $(^\bot \mathcal{GI}, \mathcal{GI})$ is a complete hereditary cotorsion pair over any ring $R$.


Since we use Gorenstein flat modules as well, we recall their definition\\
\begin{definition}
A right $R$-module $G$ is Gorenstein flat if there is an exact complex of flat right $R$-modules $\textbf{F}= \ldots \rightarrow F_1 \rightarrow F_0 \rightarrow F_{-1} \rightarrow \ldots $ such that $\textbf{F} \otimes I$ is exact for every injective left $R$-module $I$, and such that $G = Ker(F_0 \rightarrow F_{-1})$.
\end{definition}

We will also use duality pairs. They were introduced by Holm and Jorgensen in \cite{holm:10:duality}. We recall the definition.\\
\begin{definition}(\cite{holm:10:duality})
A \emph{duality pair} over $R$ is a pair $(\mathcal{M},\mathcal{C})$, where $\mathcal{M}$ is a class of $R$-modules and $\mathcal{C}$ is a class of right $R$-modules, satisfying the following conditions:
\begin{enumerate}
\item $M \in \mathcal{M}$ if and only if $M^+ \in \mathcal{C}$ (where $M^+$ is the character module of $M$, $M^+ = Hom_Z (M, Q/Z)$).
\item $\mathcal{C}$ is closed under direct summands and finite direct sums.
\end{enumerate}
\end{definition}

A duality pair $(\mathcal{M},\mathcal{C})$ is called (co)product closed if the class $\mathcal{M}$ is
closed under (co)products in the category of all $R$-modules.

\begin{theorem}\cite{holm:10:duality}
Let $(\mathcal{M},\mathcal{C})$ be a duality pair. Then the following hold:
\begin{enumerate}
\item $\mathcal{M}$ is closed under pure submodules, pure quotients, and pure extensions.
\item If $(\mathcal{M},\mathcal{C})$ is coproduct-closed then M is covering.
\end{enumerate}
\end{theorem}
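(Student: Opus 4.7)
The engine is the classical fact (due to Lambek) that a short exact sequence $0 \to A \to B \to C \to 0$ of left $R$-modules is pure exact if and only if its character-dual sequence $0 \to C^+ \to B^+ \to A^+ \to 0$ of right $R$-modules is split exact. Combined with the two defining axioms of a duality pair, this single observation drives everything in~(1), and it also feeds the direct-limit closure needed in~(2).

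\textbf{Proof of (1).} Given a pure exact sequence $0 \to A \to B \to C \to 0$ with $B \in \M$, applying $(-)^+$ yields a split short exact sequence of right $R$-modules whose middle term $B^+$ lies in $\C$ by the first axiom of a duality pair. Closure of $\C$ under direct summands then places both $A^+$ and $C^+$ in $\C$, so the ``only if'' half of the same axiom returns $A, C \in \M$. This simultaneously settles pure submodules and pure quotients. For pure extensions, one starts with $A, C \in \M$; the splitting identifies $B^+ \cong A^+ \oplus C^+$, which lies in $\C$ by closure of $\C$ under finite direct sums, and hence $B \in \M$ by the ``if'' half of the axiom.

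\textbf{Proof of (2).} The plan is to first upgrade coproduct closure to direct-limit closure and then invoke standard machinery. Any directed system $(M_i)_{i\in I}$ in $\M$ fits in a canonical pure exact sequence
\[ 0 \to K \to \bigoplus_{i \in I} M_i \to \rlim M_i \to 0, \]
whose middle term lies in $\M$ by the coproduct hypothesis; purity together with part~(1) places the quotient in $\M$ as well, so $\M$ is closed under direct limits. The covering conclusion then follows in two standard steps: (a) produce an $\M$-precover for every module, by using a cardinality bound derived from the condition $M^+ \in \C$ to extract a set (rather than a proper class) of sufficiently small generators of $\M$ and assembling precovers via the transfinite/small-object machinery; (b) promote precovers to covers by the general Enochs principle that a precovering class closed under direct limits is automatically covering.

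\textbf{Main obstacle.} The delicate step is step~(a) of part~(2). The duality pair axioms are structurally weak, and turning them into a bona fide set of generators for $\M$ requires a careful cardinality transfer through the functor $(-)^+$: one must show that every $M \in \M$ is a pure epimorphic image of a coproduct of members of $\M$ whose cardinalities are controlled by a single cardinal depending only on $R$. Once this deconstructibility-style fact is in hand, the existence of precovers and their promotion to covers via direct-limit closure are formal.
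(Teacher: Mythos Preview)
The paper does not prove this statement; it is quoted verbatim from Holm and J{\o}rgensen \cite{holm:10:duality} as background and no argument is given. So there is no ``paper's own proof'' to match against, and the comparison reduces to whether your outline reproduces the Holm--J{\o}rgensen argument.

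Your treatment of part~(1) is correct and is exactly the standard proof: Lambek's purity criterion turns the pure sequence into a split one after applying $(-)^+$, and the two duality-pair axioms do the rest.

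For part~(2), the reduction of direct-limit closure to coproduct closure plus pure-quotient closure is correct, and the two-step plan (precovers first, then covers via Enochs' direct-limit criterion) is the right architecture. The weak point is your description of step~(a). You locate the cardinality control in ``a cardinality transfer through the functor $(-)^+$'' and in the condition $M^+\in\C$, but that is not where it comes from. The bound is extracted from the closure of $\M$ under \emph{pure submodules}, which you already have from part~(1): for a suitable cardinal $\kappa$ depending only on $|R|$, every module is the directed union of its $<\kappa$-presented pure submodules, and all of those lie in $\M$. Up to isomorphism there is only a set of such small modules, and that set generates $\M$ under coproducts and pure quotients; precovers follow from this by the usual small-object style argument (this is essentially how Holm--J{\o}rgensen proceed, citing results of Rada--Saor\'in / El~Bashir). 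So your identification of the obstacle is accurate, but the proposed mechanism for overcoming it is misdirected; routing through $(-)^+$ gives you no size control, whereas pure-submodule closure does.
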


\section{results}

\begin{lemma}  If $K \in ^\bot \mathcal{GI}$, then $K^+ \in \mathcal{GF}^\bot$.
\end{lemma}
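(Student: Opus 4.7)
The plan is to fix an arbitrary Gorenstein flat right $R$-module $F$ and show that $Ext^1(F, K^+) = 0$. The key tool is the pair of natural isomorphisms
\[
Ext^i(F, K^+) \;\cong\; Tor_i(F, K)^+ \;\cong\; Ext^i(K, F^+),
\]
valid for all $i \geq 0$. Both come from the adjunctions $\Hom(-, K^+) \cong (- \otimes K)^+$ and $\Hom(K, -^+) \cong (- \otimes K)^+$ (a consequence of the $\Hom$-tensor adjunction together with exactness of $(-)^+ = \Hom_{\mathbb{Z}}(-, \mathbb{Q}/\mathbb{Z})$); deriving in $F$ yields the first identification and deriving in $K$ yields the second, with $Tor_i(F,K)^+$ the common value. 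Hence it suffices to prove $Ext^1(K, F^+) = 0$.

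The bridge is the auxiliary claim that $F \in \mathcal{GF}$ implies $F^+ \in \mathcal{GI}$. Given an exact complex $\mathbf{F} = \cdots \to F_1 \to F_0 \to F_{-1} \to \cdots$ of flat right modules with $F = \Ker(F_0 \to F_{-1})$ and $\mathbf{F} \otimes I$ exact for every injective left $R$-module $I$, the dual complex $\mathbf{F}^+$ is an exact complex of injective left $R$-modules (each $F_i^+$ is injective since $F_i$ is flat, and $(-)^+$ is exact), and $F^+$ appears as a kernel in $\mathbf{F}^+$ after a reindexing. The required $\Hom(Inj, -)$-exactness follows from $\Hom(J, F_i^+) \cong (F_i \otimes J)^+$, so that $\Hom(J, \mathbf{F}^+) \cong (\mathbf{F} \otimes J)^+$ is exact for every injective left $J$ by hypothesis on $F$.

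Once $F^+ \in \mathcal{GI}$ is established, the assumption $K \in {}^\bot \mathcal{GI}$ yields $Ext^1(K, F^+) = 0$, which transports across the adjunction above to $Ext^1(F, K^+) = 0$. Since $F$ was an arbitrary Gorenstein flat right $R$-module, this delivers the desired $K^+ \in \mathcal{GF}^\bot$.

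The only substantive step is the auxiliary claim that character duals of Gorenstein flat modules are Gorenstein injective; it requires careful handling of the indexing and the contravariance of $(-)^+$ so that $F^+$ sits in the correct position as $\Ker(I_0 \to I_{-1})$ of a complex matching the definition of $\mathcal{GI}$. This is standard bookkeeping, and everything else is formal adjunction.
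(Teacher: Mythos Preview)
Your proof is correct and follows essentially the same route as the paper's: both reduce $Ext^1(F,K^+)=0$ to $Ext^1(K,F^+)=0$ via the standard Ext--Tor--character adjunctions, and then use that $F^+$ is Gorenstein injective whenever $F$ is Gorenstein flat. The only difference is that the paper cites Holm (\cite{holm:04:gordim}, Theorem~3.6) for this last fact, whereas you sketch the direct argument by dualizing the complete flat resolution --- which is exactly how that theorem is proved in the cited reference.
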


\begin{proof}

Let $K \in ^\bot \mathcal{GI}$. For any Gorenstein flat right $R$-module $F$, we have that $Ext^1(F,K^+) \simeq Ext^1(K, F^+) = 0$ (since, by \cite{holm:04:gordim}, Theorem 3.6, $F^+$ is a Gorenstein innjective module). So $K^+ \in \mathcal{GF}^\bot$.
\end{proof}

\begin{lemma} (this is basically \cite{iacob:twosided}, Theorem 4)
Let $R$ be a left noetherian ring such that the character module of every Gorenstein injective left $R$-module is a Gorenstein flat right $R$-module. Then $(\mathcal{GI}, \mathcal{GF})$ is a duality pair.
\end{lemma}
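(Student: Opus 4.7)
The plan is to verify the two clauses in the definition of a duality pair. Clause (2), closure of $\mathcal{GF}$ under finite direct sums and direct summands, is routine: finite direct sums of complete flat resolutions remain complete flat resolutions, and Gorenstein flat modules are closed under summands by a standard swindle. The substance of the lemma lies in clause (1), the equivalence $M \in \mathcal{GI}$ if and only if $M^+ \in \mathcal{GF}$.

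The forward direction is precisely the standing hypothesis of the lemma, so I focus on the reverse: assuming $M^+ \in \mathcal{GF}$, deduce $M \in \mathcal{GI}$. I would invoke the completeness of the hereditary cotorsion pair $({}^\bot \mathcal{GI}, \mathcal{GI})$ recalled in Section 1 to produce a short exact sequence
\[
0 \to C \to K \to M \to 0
\]
with $C \in \mathcal{GI}$ and $K \in {}^\bot \mathcal{GI}$. Applying $(-)^+$ gives
\[
0 \to M^+ \to K^+ \to C^+ \to 0.
\]
By the lemma's hypothesis $C^+ \in \mathcal{GF}$, by assumption $M^+ \in \mathcal{GF}$, and since $\mathcal{GF}$ is closed under extensions, also $K^+ \in \mathcal{GF}$. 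Simultaneously Lemma 1 applied to $K$ gives $K^+ \in \mathcal{GF}^\bot$.

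Thus $K^+ \in \mathcal{GF} \cap \mathcal{GF}^\bot$, and a short swindle against the first syzygy of the complete flat resolution defining the Gorenstein flatness of $K^+$ shows that any module in this intersection is flat; hence $K^+$ is flat. The noetherian hypothesis on $R$, combined with Ishikawa's theorem (over a noetherian ring, a module is injective if and only if its character module is flat), then promotes $K$ to being injective. In particular $K \in \mathcal{GI}$, and the coresolving property of $\mathcal{GI}$ applied to $0 \to C \to K \to M \to 0$ yields $M \in \mathcal{GI}$, completing clause (1).

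The step I expect to be the main obstacle is the identification of $\mathcal{GF} \cap \mathcal{GF}^\bot$ with the flat modules together with the Ishikawa-type passage from "$K^+$ flat" to "$K$ injective"; both rely essentially on the noetherian hypothesis. The remaining manipulations of the approximation sequences supplied by the completeness of the cotorsion pair are formal.
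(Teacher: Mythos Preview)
Your argument is correct and uses the same ingredients as the paper --- completeness of $({}^\bot\mathcal{GI},\mathcal{GI})$, Lemma~1, the identification $\mathcal{GF}\cap\mathcal{GF}^\bot\subseteq\text{Flat}$, and the noetherian passage from ``$(-)^+$ flat'' to ``$(-)$ injective'' --- but it is organised more efficiently. The paper takes the special $\mathcal{GI}$-\emph{preenvelope} $0\to K\to G\to L\to 0$ of the module $K$ under consideration; after showing $L^+\in\mathcal{GF}\cap\mathcal{GF}^\bot$ and hence $L$ injective, this only yields $G.i.d.\,K\le 1$, because $K$ sits as a \emph{kernel} and the coresolving property of $\mathcal{GI}$ does not apply directly. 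The paper then needs a second approximation sequence (via \cite{christensen:06:gorenstein}, Lemma~2.18) to finish. You instead take the special ${}^\bot\mathcal{GI}$-\emph{precover} $0\to C\to K\to M\to 0$; now $M$ is the \emph{cokernel}, so once $K$ is shown injective the coresolving property gives $M\in\mathcal{GI}$ in one step. The extra input you need is closure of $\mathcal{GF}$ under extensions (to get $K^+\in\mathcal{GF}$), which holds over any ring by \cite{saroch.stovicek}; the paper's route trades this for closure under summands. Your version is shorter and avoids the auxiliary CFH lemma entirely.
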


\begin{proof}
- We show first that $K $ is Gorenstein injective if and only if $K^+$ is Gorenstein flat.\\
One implication is a hypothesis that we made on the ring.\\
For the other implication: assume that $K^+$ is Gorenstein flat. Since $(^\bot \mathcal{GI}, \mathcal{GI})$ is a complete cotorsion pair (\cite{saroch.stovicek}, Theorem 4.6), there is an exact sequence $0 \rightarrow K \rightarrow G \rightarrow L \rightarrow 0$ with $G$ Gorenstein injective, and with $L \in ^\bot \mathcal{GI}$. This gives an exact sequence $0 \rightarrow L^+ \rightarrow G^+ \rightarrow K^+ \rightarrow 0$ with $K^+$ Gorenstein flat by hypothesis, and with $L^+ \in \mathcal{GF}^\bot$ (by Lemma 1). Thus $EXt^1(K^+, L^+) = 0$. Therefore $G^+ \simeq K^+ \oplus L^+$. Since $G^+$ is Gorenstein flat, it follows that $L^+ \in \mathcal{GF}$. So $L^+ \in \mathcal{GF} \bigcap \mathcal{GF}^\bot = Flat \bigcap Cotorsion$ (\cite{EIP}, Proposition 3.1). Thus $L^+$ is a flat module. Since the ring $R$ is noetherian, it follows that $L$ is injective.\\
The short exact sequence  $0 \rightarrow K \rightarrow G \rightarrow L \rightarrow 0$ with $G$ Gorenstein injective and with $L$ injective, gives that $G.i.d. K \le 1$. By \cite{christensen:06:gorenstein}, Lemma 2.18, there is a short exact sequence $0 \rightarrow B \rightarrow H \rightarrow K \rightarrow 0$ with $B$ Gorenstein injective, and with $i.d. H = G.i.d. K < \infty$. This gives a short exact sequence $0 \rightarrow K^+ \rightarrow H^+ \rightarrow B^+ \rightarrow 0$ with both $K^+$ and $B^+$ Gorenstein flat. Thus $H^+$ is also Gorenstein flat. Since $H$ has finite injective dimension, we have that $H^+$ has finite flat dimension. But then $H^+$ is Gorenstein flat of finite flat dimension.  By \cite{enochs:00:relative}, Corollary 10.3.4), $H^+$ is flat. Therefore $H$ is injective. \\
The short exact sequence $0 \rightarrow B \rightarrow H \rightarrow K \rightarrow 0$ with $B$ Gorenstein injective and with $H$ injective gives that $K$ is Gorenstein injective.\\
So $K $ is Gorenstein injective if and only if $K^+$ is Gorenstein flat.\\

- We can prove now that $(\mathcal{GI}, \mathcal{GF})$ is a duality pair: \\
By the above, $K $ is Gorenstein injective if and only if $K^+$ is Gorenstein flat. Since $\mathcal{GF}$ is closed under direct summands and direct sums, $(\mathcal{GI}, \mathcal{GF})$ is a duality pair.
\end{proof}

\begin{proposition}
Let $R$ be a left noetherian ring such that the character module of every Gorenstein injective left $R$-module is a Gorenstein flat right $R$-module. Then the class of Gorenstein injective left $R$-modules, $\mathcal{GI}$, is closed under direct limits.
\end{proposition}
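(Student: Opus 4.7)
The plan is to combine Lemma 2 with the Holm--Jorgensen theorem cited above to reduce closure under direct limits to closure under direct sums plus closure under pure quotients. First I would invoke Lemma 2 to conclude that, under the standing hypotheses, $(\mathcal{GI},\mathcal{GF})$ is a duality pair. Then the second part of the theorem of Holm--Jorgensen quoted in the preliminaries tells us that $\mathcal{GI}$ is closed under pure quotients (this is the key input we gain from the duality pair machinery).

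Next I would observe that since $R$ is left noetherian, arbitrary direct sums of injective modules are injective, and consequently the class $\mathcal{GI}$ is closed under arbitrary direct sums: given Gorenstein injective modules $M_i$ with complete injective resolutions $\mathbf{I}^{(i)}$, the complex $\bigoplus_i \mathbf{I}^{(i)}$ is an exact complex of injectives which remains $\Hom(Inj,-)$-exact (because $\Hom(J,\bigoplus_i \mathbf{I}^{(i)})$ can be analyzed using that $J$ is injective over a noetherian ring, or more simply because the property of being totally acyclic passes to direct sums of complexes of injectives when the ring is noetherian).

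Now let $(M_i)_{i\in\mathcal{I}}$ be a directed system of Gorenstein injective modules with direct limit $M = \rlim M_i$. The standard presentation of a direct limit as a cokernel yields a canonical short exact sequence
\begin{equation*}
0 \longrightarrow K \longrightarrow \bigoplus_{i\in\mathcal{I}} M_i \longrightarrow M \longrightarrow 0,
\end{equation*}
and it is well known that this sequence is pure exact (the surjection from the coproduct to the directed colimit is always a pure epimorphism). By the previous paragraph the middle term lies in $\mathcal{GI}$, and by the first paragraph $\mathcal{GI}$ is closed under pure quotients, so $M \in \mathcal{GI}$, which is what we wanted.

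I do not expect a real obstacle here; the content of the proposition is entirely loaded into Lemma 2 (where the duality pair is established) and into the Holm--Jorgensen transfer theorem. The only small thing to verify carefully is the purity of the canonical epimorphism $\bigoplus M_i \twoheadrightarrow \rlim M_i$, which is a well-known fact about directed colimits and can be cited rather than proved.
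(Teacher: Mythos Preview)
Your proposal is correct and follows essentially the same route as the paper: invoke Lemma~2 to get the duality pair, use Holm--J{\o}rgensen to extract closure under pure quotients, obtain closure under direct sums, and then combine these since a direct limit is a pure quotient of the direct sum.

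The only minor difference is in how closure under direct sums is obtained. The paper uses the duality-pair machinery more fully: from Holm--J{\o}rgensen it takes closure under \emph{pure submodules} as well, notes that $\mathcal{GI}$ is always closed under direct products (as the right half of a cotorsion pair), and concludes closure under direct sums because a direct sum is a pure submodule of the corresponding direct product. You instead argue directly from the noetherian hypothesis that direct sums of complete injective resolutions remain complete injective resolutions. Both are valid; the paper's route is slightly slicker in that it avoids having to check $\Hom(Inj,-)$-exactness of the direct sum by hand, while yours is more self-contained and does not rely on the pure-submodule part of the Holm--J{\o}rgensen theorem.
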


\begin{proof}
By Lemma 2, $(\mathcal{GI}, \mathcal{GF})$ is a duality pair. By \cite{holm:10:duality}, Theorem 3.10 , $\mathcal{GI}$ is closed under pure submodules and pure quotients.\\
Since $\mathcal{GI}$ is closed under direct products, and since the direct sum of modules is a pure submodule of the direct product of the modules, it follows that $\mathcal{GI}$ is closed under arbitrary direct sums.\\
Since $\mathcal{GI}$ is closed under direct sums and under pure quotients, and since a direct limits of modules is a pure quotient of the direct sum of modules, it follows that $\mathcal{GI}$ is closed under direct limits.
\end{proof}

\begin{proposition}
If the class of Gorenstein injective left $R$-modules is closed under direct limits, then the ring $R$ is left noetherian and the character module of every Gorenstein injective left $R$-module is a Gorenstein flat right $R$-module.
\end{proposition}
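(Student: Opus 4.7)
The plan is to derive, from the single hypothesis that $\mathcal{GI}$ is closed under direct limits, the two separate conclusions that $R$ is left noetherian and that $N^+ \in \mathcal{GF}$ for every Gorenstein injective left $R$-module $N$. For noetherianness, the implication ``$\mathcal{GI}$ closed under direct limits $\Rightarrow$ $R$ left noetherian'' is already known in the literature (cf.\ the introduction and \cite{saroch.stovicek}); I would simply cite it rather than reprove it.

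For the character-module statement, fix $N \in \mathcal{GI}$ and a totally acyclic complex of injectives $\mathbf{I} = (\cdots \to I_1 \to I_0 \to I_{-1} \to \cdots)$ with $N = \ker(I_0 \to I_{-1})$, writing $N_k$ for its cycles. The natural candidate for witnessing Gorenstein flatness of $N^+$ is the complex $\mathbf{I}^+$. Since $(-)^+$ is faithfully exact, $\mathbf{I}^+$ is an exact complex of right $R$-modules in which $N^+$ appears as a cycle, and each term $I_n^+$ is flat because $R$ is left noetherian (by Lambek's criterion, using that every finitely generated module over a noetherian ring is finitely presented). The only outstanding condition for $N^+ \in \mathcal{GF}$ is therefore exactness of $\mathbf{I}^+ \otimes_R E$ for every injective left $R$-module $E$.

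Using the adjunction $(X^+ \otimes_R Y)^+ \cong \Hom_R(Y, X^{++})$ degreewise, together with faithful exactness of $(-)^+$, this reduces to showing that $\Hom_R(E, \mathbf{I}^{++})$ is exact for every injective $E$. Since $R$ is noetherian, $\mathbf{I}^{++}$ is again an exact complex of injectives (characters of flat modules being injective), whose cycles are the double duals $N_k^{++}$. Because $Inj \subseteq \mathcal{GI} \cap {}^{\bot}\mathcal{GI}$, one has $Ext^i(E, M) = 0$ for every $M \in \mathcal{GI}$, every injective $E$, and all $i \ge 1$; dimension-shifting along the short exact sequences $0 \to N_{k+1}^{++} \to I_k^{++} \to N_k^{++} \to 0$ then shows that $\Hom_R(E, \mathbf{I}^{++})$ is exact as soon as each $N_k^{++}$ lies in $\mathcal{GI}$.

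The main obstacle is therefore the lemma: \emph{if $N \in \mathcal{GI}$, then $N^{++} \in \mathcal{GI}$}. This is exactly where closure of $\mathcal{GI}$ under direct limits is essential. Combined with closure of $\mathcal{GI}$ under arbitrary direct products (inherited from its being the right half of the cotorsion pair $({}^{\bot}\mathcal{GI}, \mathcal{GI})$) and under direct summands (orthogonal classes are always so), one realises $N^{++}$ as a direct summand of a suitable reduced product of copies of $N$, i.e., a direct limit of products of copies of $N$; the three closure properties together then put $N^{++}$ into $\mathcal{GI}$. Granted this lemma, the reductions above show $\mathbf{I}^+$ is an $F$-totally acyclic complex of flats with $N^+$ as a cycle, so $N^+ \in \mathcal{GF}$, completing the proof.
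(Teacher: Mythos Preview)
Your argument is correct and shares its skeleton with the paper's proof --- both hinge on the lemma that $N \in \mathcal{GI}$ implies $N^{++} \in \mathcal{GI}$ --- but the executions diverge. The paper reaches this lemma by invoking \cite{stovicek:11:telescoping}, Theorem~3.5: since $({}^{\bot}\mathcal{GI},\mathcal{GI})$ is a complete hereditary cotorsion pair with $\mathcal{GI}$ closed under direct limits, $\mathcal{GI}$ is definable, and then \cite{EIP}, Lemma~1.1, gives closure under $(-)^{++}$ directly. From $N^{++}=(N^{+})^{+}\in\mathcal{GI}$ the paper then finishes in one line by citing \cite{holm:04:gordim}, Theorem~3.6 (over a coherent ring a module is Gorenstein flat iff its character module is Gorenstein injective). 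Your route is more hands-on: you obtain $N^{++}\in\mathcal{GI}$ from closure under products, direct limits, and summands, and you unpack the passage from $N^{++}\in\mathcal{GI}$ to $N^{+}\in\mathcal{GF}$ by explicitly verifying that $\mathbf{I}^{+}$ is $F$-totally acyclic. This buys independence from the definability theorem and from Holm's result, at the cost of the step ``$N^{++}$ is a direct summand of a reduced product of copies of $N$'', which is true but deserves a word of justification: it uses that $N^{++}$ is pure-injective, that the pure embedding $N\hookrightarrow N^{++}$ is elementary (pp-elimination for modules), and a Frayne-type argument to place $N^{++}$ as a pure --- hence split --- submodule of an ultrapower of $N$. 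The paper also derives left-noetherianity inside the proof (definability $\Rightarrow$ closed under pure submodules $\Rightarrow$ closed under direct sums $\Rightarrow$ noetherian via \cite{christensen:11:beyond}, Proposition~3.15), whereas you simply cite it; either is acceptable given that the introduction already records the implication as known.
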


\begin{proof}

Since $(^\bot \mathcal{GI}, \mathcal{GI})$ is a complete hereditary cotorsion pair with $\mathcal{GI}$ closed under direct limits, it follows (by \cite{stovicek:11:telescoping}, Theorem 3.5) that $\mathcal{GI}$ is a definable class. Thus $\mathcal{GI}$ is closed under pure submodules. Since direct sums are pure submodules of direct products, it follows that $\mathcal{GI}$ is closed under direct sums. By \cite{christensen:11:beyond}, Proposition 3.15, the ring $R$ is left noetherian.\\\\
Also, since $\mathcal{GI}$ is definable, by \cite{EIP}, Lemma 1.1, we have that a module $M$ is Gorenstein injective if and only if the module $M^{++}$ is Gorenstein injective.

So consider a Gorenstein injective module $M$. By \cite{EIP}, Lemma 1.1, the module $M^{++} = (M^+)^+$ is Gorenstein injective. Since the ring $R$ is noetherian (therefore coherent), by \cite{holm:04:gordim}, Theorem 3.6, $M^+$ is a Gorenstein flat module.
\end{proof}

The following result is immediate from the fact that $\mathcal{GI}$ is the right half of a hereditary cotorsion pair, so it is closed under cokernels of monomorphisms.

\begin{lemma}
If the class of Gorenstein injective modules, $\mathcal{GI}$ is closed under pure submodules then it is also closed under pure quotients.
\end{lemma}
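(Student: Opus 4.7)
The plan is to exploit the hint printed just above the statement. Since, as recalled in the preliminaries, $(^\bot\mathcal{GI}, \mathcal{GI})$ is a complete hereditary cotorsion pair, item (2) of Definition~2 tells us that $\mathcal{GI}$ is closed under cokernels of monomorphisms. The whole proof should then reduce to producing, from a pure sequence witnessing the quotient, an ordinary short exact sequence whose first two terms already lie in $\mathcal{GI}$.

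Concretely, I would start with an arbitrary pure exact sequence
\[
0 \longrightarrow A \longrightarrow B \longrightarrow C \longrightarrow 0
\]
with $B \in \mathcal{GI}$ and aim to deduce $C \in \mathcal{GI}$. Because $A$ is a pure submodule of $B$, the standing hypothesis of the lemma immediately gives $A \in \mathcal{GI}$. At this point I would forget the purity and read the same sequence as an ordinary short exact sequence with its first two terms Gorenstein injective; the coresolving property of $\mathcal{GI}$ then delivers $C \in \mathcal{GI}$.

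There is essentially no obstacle here. The only care needed is to use the correct equivalent formulation of \emph{hereditary} from Definition~2 --- the coresolving characterization in item (2) --- rather than trying to verify the $Ext^i$-vanishing characterization in item (3) directly. The hypothesis that $\mathcal{GI}$ is closed under pure submodules is used exactly once, to promote $A$ into $\mathcal{GI}$; everything else is formal.
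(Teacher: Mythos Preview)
Your argument is correct and matches the paper's own proof essentially line for line: start from a pure exact sequence with Gorenstein injective middle term, use the hypothesis to get the pure submodule into $\mathcal{GI}$, then invoke the coresolving property of $\mathcal{GI}$ (closure under cokernels of monomorphisms) to conclude. No differences to report.
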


\begin{proof}
Let $0 \rightarrow M' \rightarrow M \rightarrow M" \rightarrow 0$ be a pure exact sequence with $M$ a Gorenstein injective module. By hypothesis, $M'$ is Gorenstein injective. Since the class $\mathcal{GI}$ is closed under cokernels of monomorphisms (\cite{saroch.stovicek}) it follows that $M"$ is also Gorenstein injective. So $\mathcal{GI}$ is also closed under pure quotients in this case.
\end{proof}

\begin{theorem}
The following statements are equivalent:\\
(1) The class of Gorenstein injective left $R$-modules, $\mathcal{GI}$, is closed under direct limits.\\
(2) The ring $R$ is left noetherian, and the character module of every Gorenstein injective left $R$-module is a Gorenstein flat right $R$-module.\\
(3) The class of Gorenstein injective left $R$-modules, $\mathcal{GI}$, is covering and closed under pure quotients.\\
(4) The class of Gorenstein injective left $R$-modules, $\mathcal{GI}$, is closed under pure submodules.\\
(5) $(\mathcal{GI}, \mathcal{GF})$ is a duality pair.
\end{theorem}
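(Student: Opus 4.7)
The plan is to close the cycle $(1) \Rightarrow (2) \Rightarrow (5) \Rightarrow (4) \Rightarrow (1)$ with the machinery already built, and then attach $(3)$ via separate implications $(1) \Rightarrow (3)$ and $(3) \Rightarrow (1)$. Three arrows of the main cycle are already essentially in place: $(1) \Rightarrow (2)$ is Proposition~2, $(2) \Rightarrow (5)$ is Lemma~2, and $(5) \Rightarrow (4)$ is part~(1) of the Holm--J\o rgensen theorem recalled in Section~1, applied to the duality pair $(\mathcal{GI}, \mathcal{GF})$. For $(4) \Rightarrow (1)$ I will recycle the argument in the proof of Proposition~1: Lemma~3 upgrades closure under pure submodules to closure under pure quotients; $\mathcal{GI}$ is closed under direct products because it is the right half of a cotorsion pair; direct sums sit as pure submodules of direct products and so lie in $\mathcal{GI}$ by hypothesis; and direct limits, being pure quotients of direct sums, lie in $\mathcal{GI}$ as well.

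For $(1) \Rightarrow (3)$: closure under pure quotients follows from the chain $(1) \Rightarrow (2) \Rightarrow (5)$ together with part~(1) of the Holm--J\o rgensen theorem. For the covering property, the argument inside Proposition~1 shows that $\mathcal{GI}$ is closed under arbitrary coproducts under hypothesis $(1)$, so part~(2) of the same theorem delivers that $\mathcal{GI}$ is covering.

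The implication $(3) \Rightarrow (1)$ is the one that requires a genuinely new argument, and it is the main obstacle. The plan is to extract closure under arbitrary direct sums from the covering property alone. Given a family $\{G_i\}_{i \in I} \subseteq \mathcal{GI}$, take a $\mathcal{GI}$-cover $\phi: G \to \bigoplus_{i \in I} G_i$. Since each $G_j$ is Gorenstein injective, the canonical injection $\iota_j: G_j \to \bigoplus_{i \in I} G_i$ factors through $\phi$ as $\phi g_j = \iota_j$ for some $g_j: G_j \to G$. The universal property of the coproduct assembles $\{g_j\}$ into a morphism $g: \bigoplus_{i \in I} G_i \to G$ with $g \iota_j = g_j$ for every $j$; hence $\phi g \iota_j = \phi g_j = \iota_j$ for every $j$, so $\phi g$ is the identity of $\bigoplus_{i \in I} G_i$. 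Thus $\bigoplus_{i \in I} G_i$ is a direct summand of the Gorenstein injective module $G$, and is therefore itself Gorenstein injective, since $\mathcal{GI}$ is the right orthogonal of $^\bot \mathcal{GI}$ and hence closed under direct summands. Combining this closure under direct sums with the hypothesized closure under pure quotients, and using that every direct limit is a pure quotient of a direct sum, we conclude that $\mathcal{GI}$ is closed under direct limits, which is $(1)$.
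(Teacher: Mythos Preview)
Your proof is correct and follows essentially the same skeleton as the paper's: both establish the cycle through $(2)$, $(4)$, $(5)$ and attach $(3)$ separately via $(1)\Rightarrow(3)$ and $(3)\Rightarrow(1)$. There are two minor but noteworthy differences. First, for $(1)\Rightarrow(3)$ the paper obtains the covering property by citing external results (\cite{EEI}, Proposition~3 and \cite{christensen:06:gorenstein}, Proposition~3.15), and obtains closure under pure quotients via \v{S}\v{t}ov\'{\i}\v{c}ek's definability theorem plus Lemma~3; you instead route everything through the already-established duality pair $(\mathcal{GI},\mathcal{GF})$ and invoke the Holm--J\o rgensen theorem, which is more self-contained given the machinery recalled in Section~1. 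Second, for $(3)\Rightarrow(1)$ the paper simply asserts that a covering class is closed under direct sums, whereas you supply the standard splitting argument explicitly (and in fact your argument only uses that $\mathcal{GI}$ is \emph{precovering} and closed under summands). Both approaches are valid; yours trades a couple of external citations for arguments internal to the paper.
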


\begin{proof}
1 $\Rightarrow$ 2 is Proposition 2.\\
2 $\Rightarrow$ 1 is Proposition 1.\\
1 $ \Rightarrow$ 4. By \cite{stovicek:11:telescoping}, Theorem 3.5, $\mathcal{GI}$ being closed under direct limits implies that $\mathcal{GI}$ is a definable class, so it is closed under pure submodules (by definition).\\
4 $\Rightarrow$ 1. 
Since $\mathcal{GI}$ is closed under pure submodules, it follows that $\mathcal{GI}$ is also closed under pure quotients (by Lemma 3). Since $\mathcal{GI}$ is closed under direct products, and since the direct sum of modules is a pure submodule of the direct product of the modules, it follows that $\mathcal{GI}$ is closed under arbitrary direct sums.\\
Since $\mathcal{GI}$ is closed under direct sums and under pure quotients, and since a direct limits of modules is a pure quotient of the direct sum of modules, it follows that $\mathcal{GI}$ is closed under direct limits.\\
1 $\Rightarrow$ 3 By \cite{EEI}, Proposition 3, and \cite{christensen:06:gorenstein}, Proposition 3.15, if $\mathcal{GI}$ is closed under direct limits then $\mathcal{GI}$ is a covering class.\\
And by \cite{stovicek:11:telescoping}, Theorem 3.5, the class $\mathcal{GI}$ is definable, therefore it is closed under pure submodules. By Lemma 3, $\mathcal{GI}$ is also closed under pure quotients.

3 $\Rightarrow$ 1. Since $\mathcal{GI}$ is covering, it is closed under arbitrary direct sums. Since $\mathcal{GI}$ is closed under pure quotients and a direct limit of modules is a pure quotient of the direct sum of the modules, it follows that $\mathcal{GI}$ is closed under direct limits.\\

2 $\Rightarrow$ 5. By Lemma 2.\\

5 $\Rightarrow$ 4 . By \cite{holm:10:duality}, Theorem 3.10 , $\mathcal{GI}$ is closed under pure submodules.\\

\end{proof}

\textbf{Examples of noetherian rings such that the character module of any Gorenstein injective left $R$-module is a Gorenstein flat right $R$-module}.\\

- By \cite{holm:10:duality}, Lemma 2.5 (b), any commutative noetherian ring $R$ with
a dualizing complex has the desired property: the character modules of Gorenstein injective modules are Gorenstein flat.\\

- Any Gorenstein ring also has the desired property: if $R$ is an Iwanaga-Gorenstein ring, then $\mathcal{GI}$ is closed under direct limits (\cite{enochs:00:relative}, Lemma 11.1.2). Then the result follows form \cite{EIP}, Lemma 1.1.\\

We recall that a module $M$ is called strongly Gorenstein injective if there is an exact and $Hom(Inj, -)$ exact complex $\ldots \rightarrow E \rightarrow E \rightarrow E \rightarrow \ldots$ with $E$ injective, and with $M = Ker(E \rightarrow E)$.
\begin{theorem}
Let $R$ be a left noetherian ring such that $id.$ $R_R \le n$ for
some positive integer $n$. Then the character modules of Gorenstein injective left
$R$-modules are Gorenstein flat.
\end{theorem}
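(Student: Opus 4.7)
The plan is to construct a complete flat resolution of $M^+$ by dualizing a complete injective resolution of $M$, and to use the hypothesis $id.\,R_R \le n$ to verify the required tensor-exactness. Let $\textbf{I} = \cdots \to I_1 \to I_0 \to I_{-1} \to \cdots$ be a complete injective resolution of $M$, with $M = \Ker(I_0 \to I_{-1})$. Applying the exact contravariant functor $(-)^+$ yields a complex $\textbf{I}^+ = \cdots \to I_{-1}^+ \to I_0^+ \to I_1^+ \to \cdots$ of right $R$-modules that is exact and has $M^+$ as a cycle. Since $R$ is left noetherian, every injective left $R$-module is absolutely pure, and so by Lambek's theorem each $I_k^+$ is flat. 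Hence $\textbf{I}^+$ is an exact complex of flats with $M^+$ a cycle.

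To finish, it remains to verify that $\textbf{I}^+ \otimes_R J$ is exact for every injective left $R$-module $J$. The plan is a dimension-shifting argument based on the following key consequence of the hypothesis $id.\,R_R \le n$: every injective left $R$-module has flat dimension at most $n$. To see this, dualizing the finite injective coresolution $0 \to R \to E^0 \to \cdots \to E^n \to 0$ of $R$ as a right $R$-module produces a length-$n$ flat resolution of $R^+$ as a left $R$-module, so $fd.\,R^+ \le n$. Since $R^+$ is an injective cogenerator and every injective left $R$-module splits off a product of copies of $R^+$ (products of injectives being injective over the left noetherian $R$), one extends this bound to all injective left $R$-modules via a Chase-type argument handling products of flat modules; this extension is the main technical obstacle.

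With the bound $fd.\,J \le n$ in hand, the remainder is formal. The cycles of $\textbf{I}^+$ are exactly $\{M_k^+\}$ where $M_k = \Ker(I_k \to I_{k-1})$ is Gorenstein injective, and each short exact sequence $0 \to M_k \to I_k \to M_{k-1} \to 0$ in $\textbf{I}$ dualizes to $0 \to M_{k-1}^+ \to I_k^+ \to M_k^+ \to 0$. Since $I_k^+$ is flat, dimension shift yields $Tor_{i+1}(M_k^+, J) \cong Tor_i(M_{k-1}^+, J)$ for $i \ge 1$; iterating $n$ times gives $Tor_1(M_k^+, J) \cong Tor_{n+1}(M_{k+n}^+, J) = 0$. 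Vanishing of $Tor_1$ against $J$ for every cycle of $\textbf{I}^+$ is equivalent to the exactness of $\textbf{I}^+ \otimes_R J$, completing the proof.
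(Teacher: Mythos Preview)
Your approach is correct and in fact more direct than the paper's. Both arguments rest on the same key input: over a left noetherian ring with $id.\,R_R \le n$, every injective left $R$-module has flat dimension at most $n$. The paper simply cites this as Iwanaga's Proposition~1; your sketched justification via $R^+$ and a Chase-type product argument does not go through as stated, since Chase's theorem requires \emph{right} coherence of $R$, which is not part of the hypotheses. You should just cite Iwanaga here.

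Granting that bound, your dimension-shifting argument on the dualized complex $\mathbf{I}^+$ is clean and complete: the $I_k^+$ are flat right modules because over a left noetherian ring injective left modules are absolutely pure and hence have flat character modules (this is the Cheatham--Stone / Fieldhouse direction rather than Lambek proper, but the fact is standard), and the iterated isomorphism $Tor_1(M_k^+,J)\cong Tor_{n+1}(M_{k+n}^+,J)=0$ yields the tensor-exactness of $\mathbf{I}^+\otimes_R J$.

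The paper instead reduces first to \emph{strongly} Gorenstein injective $M$ (where the complete injective resolution is periodic), shows via Ext-duality and the periodicity that $M^{++}$ is Gorenstein injective, passes to general $G\in\mathcal{GI}$ as a summand of such an $M$, and finally invokes Holm's theorem to deduce $G^+$ is Gorenstein flat from $G^{++}$ Gorenstein injective. That last step requires $R$ to be right coherent, which the paper asserts but which is not implied by left noetherianity alone. Your route sidesteps this issue entirely: by building the complete flat resolution of $M^+$ directly, you never need Holm's theorem or the strongly Gorenstein reduction.
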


\begin{proof} Let $M$ be a strongly Gorenstein injective left $R$-module, and let $I$ be any
injective left $R$-module. By \cite{Iwanaga}, Proposition 1, the flat dimension of $I$ is less than or
equal to $n$. Then the right $R$-module $I^+$ has injective dimension $\le n$. It follows that
$Ext^i(M^+, I^+)=0$ for any $i \ge n+1$. So we have that $Ext^i(I, M^{++}) \simeq Ext^i(M^+, I^+)=0$
for all $i \ge n+1$, for any injective $_R I$.
But $M$ is strongly Gorenstein injective, so there exists an exact sequence $0 \rightarrow M \rightarrow E \rightarrow M \rightarrow 0$ with $E$ injective. This gives an exact sequence $0 \rightarrow M^{++} \rightarrow E^{++} \rightarrow M^{++} \rightarrow 0$, with $E^{++}$ an injective left R-module. It follows that $Ext^k (-, M^{++}) \simeq Ext^1(-, M^{++})$ for any $k \ge 1$. By the above, we have that $Ext^k (I, M^{++})=0$ for any
injective $_R I$, for any $k \ge 1$.
Then the exact sequence $ \ldots \rightarrow E^{++} \rightarrow E^{++} \rightarrow M^{++} \rightarrow 0$ is also $Hom(Inj, –)$ exact. So $M^{++}$ has an exact left injective resolution. Since $Ext^i (I, M^{++}) =0$ for all $i \ge 1$, for
any injective $_R I$, and $M^{++}$ has an exact left injective resolution it follows that $M^{++}$ is
Gorenstein injective ([8], Proposition 10.1.3).\\
Let $G$ be a Gorenstein injective left $R$ module. By [3], there exists a strongly
Gorenstein injective left $R$-module $M$ such that $M \simeq G \oplus H$. It follows that $G^{++}$ is
isomorphic to a direct summand of the Gorenstein injective module $M^{++}$, and therefore it is Gorenstein injective.
Since the ring $R$ is right coherent and $G^{++} = (G^+)^+$ is Gorenstein , it follows that $G^+$ is Gorenstein flat, for any Gorenstein injective $_R G$.

\end{proof}

- By \cite{iacob:gorflat}, Theorem 3, if $R$ is a two sided noetherian ring such that $_R R$ is a left $n$-perfect
ring, and there exists a dualizing module $_RV_R$ for the pair $(R, R)$, then the character
modules of Gorenstein injective left R-modules are Gorenstein flat right R-modules.\\

- By \cite{liu}, if $R$ is a left artinian ring such that the injective envelope of every
simple left $R$-module is finitely generated, then the character module of
Gorenstein injective left $R$-modules are Gorenstein flat.

\textbf{Some open questions:}\\

1. Does every (left ) noetherian ring satisfy the condition that character modules of Gorenstein injectives are Gorenstein flat? Or is this class of rings strictly contained in the class of noetherian rings?\\

2. Is the assumption that $\mathcal{GI}$ is covering implying that it is closed under pure quotients? In other words, is it true that the class of Gorenstein injective modules is closed under direct limits if and only if it is a covering class? This would be consistent with the Enochs' conjecture: "Every covering class is closed under direct limits".


\end{document}